\newenvironment{nouppercase}{%
  \renewcommand{\uppercasenonmath}[1]{}}{}
\newtheorem{theorem}{Theorem}[section]
\newtheorem{lemma}[theorem]{Lemma}
\theoremstyle{definition}
\theoremstyle{remark}
\newtheorem{remark}[theorem]{Remark}
\newcommand{\Q}{\mathbb{Q}}
\newcommand{\C}{\mathbb{C}}
\newcommand{\E}{\mathbb{E}}
\newcommand{\F}{\mathbb{F}}
\renewcommand{\P}{\mathbb{P}}
\newcommand{\cA}{\mathcal{A}}
\newcommand{\cB}{\mathcal{B}}
\newcommand{\cC}{\mathcal{C}}
\newcommand{\cO}{\mathcal{O}}
\newcommand{\fp}{\mathfrak{p}}
\newcommand{\on}{\operatorname}
\renewcommand{\mod}[1]{\,(\mathrm{mod}\,#1)}
\newcommand{\of}[1]{\left(#1\right)}
\newcommand{\set}[1]{\left\{#1\right\}}
\newcommand{\norm}[1]{\left\Vert#1\right\Vert}
\author{Huixi Li}
\address{School of Mathematical Sciences and LPMC, Nankai University, Tianjin 300071, China}
\email{lihuixi@nankai.edu.cn}
\author{Biao Wang}
\address{School of Mathematics and Statistics, Yunnan University, Kunming, Yunnan 650500, China}
\email{bwang@ynu.edu.cn}
\author{Shaoyun Yi}
\address{School of Mathematical Sciences, Xiamen University, Xiamen, Fujian 361005, China}
\email{yishaoyun926@xmu.edu.cn}
\date{\today}
\title{On the minimum modulus problem in number fields}
\subjclass[2020]{Primary 11B25, 11A07, 11R04}
\keywords{covering system, minimum modulus problem, the distortion method}
\begin{document}
	
\begin{abstract} The minimum modulus problem on covering systems was posed in 1950 by Erd\H{o}s, who asked whether the minimum modulus of a covering system with distinct moduli is bounded. In 2007,  Filaseta, Ford, Konyagin, Pomerance and Yu affirmed it if the reciprocal sum of the moduli of a covering system is bounded. Later in 2015, Hough resolved this problem by showing that the minimum modulus in any covering system with distinct moduli is at most $10^{16}$. In 2022, Balister, Bollob\'as, Morris, Sahasrabudhe and Tiba reduced this bound to $616,000$ by developing a versatile method called the distortion method. Recently, Klein, Koukoulopoulos and Lemieux generalized Hough's result by using a suitable modification of the distortion method. In this paper, we develop the distortion method further by introducing the theory of probability measures associated to an inverse system. Following Klein et al.'s work, we provide a solution to Erd\H{o}s' minimum modulus problem in number fields. As an application, we prove that the $j$-th smallest norm in a minimal covering system of a number field with distinct moduli is bounded.
 
\end{abstract}
	
\begin{nouppercase}
\maketitle
\end{nouppercase}

\section{Introduction and statement of results}

A \textit{covering system} is
a finite collection of arithmetic progressions that covers the integers.
It was first introduced by Erd\H{o}s \cite{Erdos1950}
in 1950 to answer a question of Romanoff.  And in his collections of open problems \cite{Erdos1957,  Erdos1963, Erdos1973, EG1980}, Erd\H{o}s posed a number of problems concerning covering systems. One of them is the well-known minimum modulus problem: is there a uniform upper bound on the smallest modulus of all covering systems with distinct moduli? In 2007,  Filaseta, Ford, Konyagin, Pomerance and Yu \cite{FFSPY2007} made a significant process towards this problem, showing that the reciprocal sum of the moduli  grows with the minimum modulus. Building on their work, Hough \cite{Hough2015} made a remarkable breakthrough and gave an affirmative answer to Erd\H{o}s' minimum modulus problem in 2015, showing that the minimum modulus of any covering system with distinct moduli is at most $10^{16}$. In 2022, Balister, Bollob\'as, Morris, Sahasrabudhe and Tiba \cite{BBMRS2022} reduced this bound to $616,000$ by developing a general method for bounding the density of the uncovered set. This method is named by them \textit{the distortion method}; see \cite{BBMST2020_2} for more details about it. Recently, Cummings, Filaseta, and Trifonov \cite{CFT2022} reduced the bound to 118 for  covering systems with distinct squarefree moduli. They also proved in general the $j$-th smallest modulus in a minimal covering system with distinct moduli is bounded by some unspecified constant $C_j, j\ge1$. This result was improved by Klein, Koukoulopoulos and Lemieux \cite{KKL2022} later in the sense that they gave a specified bound for the $j$-th smallest modulus. In fact, they achieved the following generalization of Hough's theorem by using a suitable modification of the distortion method.  For a covering system $\cA$, the \textit{multiplicity} of $\cA$ is defined to be the largest number of times that a modulus appears in $\cA$.  

\begin{theorem}[\text{\cite[Theorem~3]{KKL2022}}]\label{KKL Thm3}
Let $\cA$ be a covering system of multiplicity $s$. Then there exists an absolute constant $c > 0$ such that the smallest modulus of $\cA$ is at most $\exp \big(c \log^2(s+1)/\log\log(s+2)\big)$. 
\end{theorem}

We refer the reader to Balister's recent survey \cite{Balister2024} on these beautiful results.

In this paper, our goal is to establish the generalization of Theorem~\ref{KKL Thm3} to the covering systems of the ring of integers of an arbitrary number field. In fact, the covering systems for number fields have already been actively studied in the literature; see for example \cite{Kim2009, Kim2012, JD2014, BB2020}. To explain our main result, we first introduce some notation in the number fields setting. 

Let $K/\Q$ be a number field. Let $\cO_K$ be the ring of integers in $K$. For any ideal $I\subset \cO_K$, we denote by $\norm{I}$ the norm of $I$, i.e., $\norm{I}=[\cO_K\colon I]$. Let $\cA=\set{a_1+I_1, \dots, a_n+I_n}$ be a finite collection of congruence classes with $a_i\in\cO_K$, $1\le i \le n$. Let 
\[
m(\cA)= \max_{I\subset \cO_K} \#\set{1 \le j \le n: I_j=I}
\]
be the multiplicity of $\cA$. We say $\cA$ is a \textit{covering system} of $\cO_K$ if $\cO_K=\bigcup_{i=1}^n(a_i+I_i)$; see \cite[Page~124]{Kim2009} for an example.  Now we are ready to state our main theorem.

\begin{theorem}\label{mainthm}
For a number field $K/\Q$, let $\cA=\set{a_1+I_1, \dots, a_n+I_n}$ be a covering system of multiplicity $s$ with ideals $I_1,\dots,I_n$ in $\cO_K$. Then there exists a constant $c_K>0$ depending only on $K$ such that
\[
\min_{1\le i \le n}\norm{I_i} \le \exp\big(c_K\log^2(s+1)/\log\log(s+2)\big).
\]
\end{theorem}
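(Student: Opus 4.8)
The plan is to run the distortion method inside the profinite completion of $\cO_K$ and to contradict the covering hypothesis under the assumption that every modulus is large. Suppose, for contradiction, that $\norm{I_i} > y$ for all $1 \le i \le n$, where $y = \exp\of{c_K \log^2(s+1)/\log\log(s+2)}$ and $c_K > 0$, depending only on $K$, will be fixed at the end. Work with the inverse system $\of{\cO_K/I}_I$ indexed by the nonzero ideals of $\cO_K$ ordered by reverse divisibility, with the natural surjections $\cO_K/I \twoheadrightarrow \cO_K/J$ for $J \mid I$; its inverse limit is the profinite ring $\wh{\cO_K} \cong \prod_\fp \cO_\fp$, where $\cO_\fp$ is the completion of $\cO_K$ at $\fp$, and the normalized Haar measure $\mu$ assigns measure $\norm{I}^{-1}$ to the clopen set cut out by any congruence $a + I$. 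Each class $a_i + I_i$ with $I_i \in \fD(K)$ singles out the prime $P_{\min}(I_i)$, and this is exactly where distinguishability enters: it gives a well-defined assignment of each $I_i$ to a single prime ideal --- the analogue of the largest prime factor of an integer modulus, which for a number field would otherwise be ambiguous because distinct prime ideals may share a norm.

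Enumerate the prime ideals of $\cO_K$ as $\fp_1, \fp_2, \dots$ with $\norm{\fp_1} \le \norm{\fp_2} \le \cdots$, and for each $k$ put $\fM_k = \prod_{j \le k}\fp_j^{e_j}$ with $e_j = \max_i v_{\fp_j}(I_i)$. If $P_{\min}(I_i) = \fp_k$ then, by distinguishability, every other prime factor of $I_i$ has norm strictly below $\norm{\fp_k}$ and so occurs among $\fp_1, \dots, \fp_{k-1}$; hence $I_i \mid \fM_k$. Following the theory of probability measures associated to an inverse system developed in the earlier sections, I would construct a sequence $\mu_0 = \mu, \mu_1, \mu_2, \dots$, with $\mu_k$ a measure on $\cO_K/\fM_k$ (pulled back to $\wh{\cO_K}$), passing from $\mu_{k-1}$ to $\mu_k$ by a distortion step in the $\fp_k$-coordinate followed by restriction to the complement of $\bigcup_{P_{\min}(I_i) = \fp_k}\of{a_i + I_i}$. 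By construction $\mu_k$ is carried by the points left uncovered by every $a_i + I_i$ with $P_{\min}(I_i) \in \set{\fp_1, \dots, \fp_k}$.

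The crux is the quantitative control of the mass removed at step $k$. As in the argument of Klein, Koukoulopoulos and Lemieux, the distortion is tuned so that this loss is at most a bounded multiple of $\sum_{P_{\min}(I_i) = \fp_k}\norm{I_i}^{-1}$, amplified by a distortion factor that grows slowly in $k$; one exploits here that every such $I_i$ is $\norm{\fp_k}$-smooth with norm exceeding $y$ and is repeated at most $s$ times. To sum the resulting recursion over all $k$ one feeds in the number-field analytic inputs: Landau's prime ideal theorem $\#\set{\fp : \norm{\fp} \le t} = \on{Li}(t) + O_K\of{t\,e^{-c\sqrt{\log t}}}$, the Mertens-type estimate $\sum_{\norm{\fp}\le t}\norm{\fp}^{-1} = \log\log t + M_K + O_K(1/\log t)$, and the observation that prime ideals of residue degree $\ge 2$ contribute only $O_K(\sqrt t)$ to the count up to $t$, so that the sequence of prime-ideal norms behaves like the rational primes up to bounded multiplicative redundancy. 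With these the KKL estimates carry over with all implied constants depending only on $K$, and the total mass removed across all primes stays strictly below $1$ provided $\log y \le c_K\log^2(s+1)/\log\log(s+2)$ with $c_K = c_K(K)$ small enough. The limiting measure is then nonzero and supported on $\cO_K \setminus \bigcup_{i=1}^n\of{a_i + I_i}$ (in its profinite closure), forcing that set to be nonempty and contradicting that $\cA$ is a covering system; hence $\min_{1\le i\le n}\norm{I_i} \le \exp\of{c_K\log^2(s+1)/\log\log(s+2)}$.

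I expect the main obstacle to be re-deriving the distortion recursion inside the inverse system $\of{\cO_K/I}_I$ in place of $\of{\Z/m\Z}_m$: one must verify that every step of the KKL argument uses only multiplicativity of the norm, the product decomposition $\wh{\cO_K} \cong \prod_\fp \cO_\fp$, and quantitative counting of prime ideals, and that the distinguishability hypothesis supplies precisely the ingredient --- a well-defined $P_{\min}$ --- that is automatic over $\Z$. A secondary, more bookkeeping-heavy difficulty is propagating the dependence on $K$ through Landau's and Mertens' theorems carefully enough that the final constant $c_K$ depends on nothing but the number field $K$.
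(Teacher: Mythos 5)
Your proposal follows essentially the same route as the paper: the distortion method of Balister--Bollob\'as--Morris--Sahasrabudhe--Tiba in the form used by Klein--Koukoulopoulos--Lemieux, transported to $\cO_K$ by ordering prime ideals by norm, using distinguishability to make $P_{\min}(I_i)$ well defined (so that each $I_i$ divides the product of the primes up to $P_{\min}(I_i)$ and the relevant bad sets are measurable for the inverse system), and then bounding first and second moments with Landau's prime ideal theorem and Mertens' estimate as the analytic inputs; the paper simply works on the finite quotient $\cO_K/Q$ with $Q=I_1\cap\cdots\cap I_n$ rather than on the profinite completion, which is an immaterial difference. Two points in your sketch need fixing when fleshed out: the constant $c_K$ must be taken \emph{large}, not small (enlarging $c_K$ weakens the conclusion and strengthens the contradiction hypothesis $\norm{I_i}>y$); and the argument requires two distinct thresholds, not one --- the distortion parameters switch from $\delta_j=0$ to $\delta_j=1/2$ at $\norm{\fp_j}\approx Cs^3$, far below the minimum-modulus bound, and it is the reciprocal sum of $Cs^3$-smooth ideals of norm at least $x$ (a Rankin/smooth-ideal estimate) that is responsible for the shape $\exp\big(c_K\log^2(s+1)/\log\log(s+2)\big)$, so conflating the smoothness cutoff with the modulus bound would break the quantitative conclusion.
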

\begin{remark} If one can generalize Crittenden and Vanden-Eynden's theorem \cite{CV1970} to any number field, then one may obtain a generalization of Theorem~1 in Klein, Koukoulopoulos and Lemieux's work \cite{KKL2022} to the setting of number fields.  See \cite{BBMST2020_1} for a short proof of Crittenden and Vanden-Eynden's theorem.
\end{remark}

\begin{remark}
	Let $\F_q$ be a finite field. Let $\F_q[x]$ be the polynomial ring over $\F_q$. In 2024, the authors of this paper and co-author Wang \cite{LWWY2024} proved an analogue of Theorem~\ref{mainthm} for the covering systems of $\F_q[x]$. Moreover, in \cite{LWWY2025} they showed that there is no covering system of $\F_q[x]$ with distinct moduli for $q\ge 759$. Recently, this lower bound is reduced to $q>73$ in \cite{Wang2026ijnt}.
\end{remark}

When we take $s = 1$ in Theorem~\ref{mainthm}, then it provides a solution to Erd\H{o}s' minimum modulus problem in number fields. That is,  in a given number field, the smallest norm of the moduli of all covering systems with distinct moduli is uniformly bounded. To prove Theorem~\ref{mainthm}, we follow the ideas in the proof of Theorem~3 in  \cite{KKL2022} and derive analogues of the necessary lemmas and theorems in \cite{BBMRS2022, KKL2022} in the number fields setting. First, in Section~\ref{sec_distortion_method} we present a notable generalization of the distortion method  by introducing the theory of probability measures associated to an inverse system; see Theorem~\ref{thm_distortion_method}. Then we restrict it to the number fields setting for our purpose. One may apply Theorem~\ref{thm_distortion_method} to prove the analogue of Theorem~\ref{KKL Thm3} in other situations; say function fields.  In Sections~\ref{boundind_moments}-\ref{sec_pf_mainthm},  we investigate the first and second moments appearing in Theorem~\ref{thm_distortion_method_nf} similarly as in \cite{KKL2022} and then complete the proof of our main theorem. Finally, in Section~\ref{sectjthsmallestnorm}, we apply our main theorem to study the $j$-th smallest norm in a minimal covering system of a number field with distinct moduli, where we prove Theorem~\ref{thmjthsmallestnorm}. 

\section{The distortion method}
\label{sec_distortion_method}

\subsection{Probability measures associated to an inverse system}
\label{sec_distortion_method_general}

In this section, we construct a sequence of probability measures on a finite set with an inverse system. Let $S$ be a finite set, $T$ be an arbitrary set, and $\pi:S\to T$ be a map. We say a function $f:S\to \C$ is \textit{$\pi$-measurable} if $f(x) = f(x')$ whenever $\pi(x)=\pi(x')$ for any $x,x'\in S$. For any subset $B\subseteq S$, we say $B$ is \textit{$\pi$-measurable}, if the following property holds: for any $x\in B, x'\in S$ if $\pi(x')=\pi(x)$, then $x'\in B$. Let $\P$ be a $\pi$-measurable probability measure on $S$. Let $J\ge1$ be an integer. Let $B_1, \dots, B_J$ be subsets of $S$, $T_1,\dots, T_J$ be arbitrary sets, and $\pi_j: S \to T_j$ be arbitrary maps for $1\le j \le J$. We say $\set{\pi_j}$ is a \textit{$\pi$-inverse system} (or simply an \textit{inverse system}) if the following property holds: for any $1\le j\le J$ and $x,x'\in S$, if $\pi_j(x)=\pi_j(x')$, then $\pi_{j-1}(x)=\pi_{j-1}(x')$, where $\pi_0=\pi$. Let $\delta_1,\dots, \delta_J\in [0,1/2]$ be parameters. With these definitions in hand, following the steps in \cite{BBMRS2022,KKL2022} we can construct a sequence of $\pi_j$-measurable\footnote{In \cite{BBMRS2022,KKL2022}, it is called  $Q_j$-measurable.} probability measures $\P_j (1\le j \le J)$ on $S$ with the triples $\set{(\pi_j, B_j, \delta_j): 1\le j\le J}$ and the initial probability measure $\P$ as follows. 

First, we take $\P_0=\P$. Next, for $1\le j \le J$, suppose we have already constructed a $\pi_{j-1}$-measurable probability measure $\P_{j-1}$ on $S$. For all $x\in S$ we define
\[
  \alpha_j(x) \colonequals \frac{|F_{j-1}(x)\cap B_j|}{|F_{j-1}(x)|},
\]
where $F_{j-1}(x)=\{x'\in S:\pi_{j-1}(x')=\pi_{j-1}(x)\}$. 
Note that $\alpha_j$ is $\pi_{j-1}$-measurable, since
$F_{j-1}(x')=F_{j-1}(x)$ holds for any $x', x\in S$ satisfying $\pi_{j-1}(x')=\pi_{j-1}(x)$. 
If $\delta_j=0$, we simply set $\mathbb P_j=\mathbb P_{j-1}$.
If $0<\delta_j\le 1/2$, we define $\mathbb P_j$ by
\[
\mathbb P_j(x)\colonequals \mathbb P_{j-1}(x)\cdot
\begin{cases}
\dfrac{1_{x\notin B_j}}{1-\alpha_j(x)},&
\text{if }\alpha_j(x)<\delta_j,\\[6pt]
\dfrac{\alpha_j(x)-1_{x\in B_j}\delta_j}
{\alpha_j(x)(1-\delta_j)},&
\text{if }\alpha_j(x)\ge \delta_j.
\end{cases}
\]
One may easily check that $\P_j$ is $\pi_j$-measurable, and we have
\begin{equation}\label{eqn_measurable}
    \P_j(F_{j-1}(x)) = \P_{j-1}(F_{j-1}(x)) 
\end{equation}
for all $x\in S$. It follows that $\P_j(S)=\P_{j-1}(S)=1$, and hence it is a probability measure on $S$. Thus, by induction we obtain a sequence of probability measures $\set{\P_j: 1\le j \le J}$. 

Now, for $1 \leq j \leq J$, we define the expectations
\[
\E_j[f]\colonequals \sum_{x\in S} f(x)\P_j(x)
\]
for all functions $f\colon S\to \C$, and we set
\[
M_j^{(1)}\colonequals\E_{j-1}[\alpha_j] 
\quad\text{and}\quad
M_j^{(2)}\colonequals\E_{j-1}[\alpha_j^2]
\]
as the first and second moments. Then the following theorem is the key result of the distortion method in \cite[Theorem~3.1]{BBMRS2022} in this general setting. 

\begin{theorem}\label{thm_distortion_method}
  Assume the above notation.  Let $B_j$ be $\pi_j$-measurable subsets of $S$ for all $1\le j \le J$. If 
	\[
	\sum_{j=1}^J
	\min\left\{  M_j^{(1)},\frac{M_j^{(2)}}{4\delta_j(1-\delta_j)}\right\}<1 ,
	\]
	then 
 \[
 \P_J\of{\bigcup_{1\le j \le J} B_j}<1.
 \]
\end{theorem}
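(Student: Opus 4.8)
The plan is to track how the "bad" probability $\P_j(B_j)$ evolves and to show that the total loss is controlled by the sum of the stated minima. The key structural fact is the recursive definition of $\P_j$: at step $j$, on each fiber $F_{j-1}(x)$ where the local density $\alpha_j(x)$ of $B_j$ is at least $\delta_j$, the measure is reweighted so that the new density of $B_j$ inside that fiber drops to exactly $\delta_j$ (this is the content of the second branch), while on fibers where $\alpha_j(x) < \delta_j$ the set $B_j$ is simply removed and the remaining mass rescaled to compensate. Because $B_j$ is $\pi_j$-measurable, it is a union of $\pi_j$-fibers, and since $\P_j$ is $\pi_j$-measurable it is constant on those fibers; combined with \eqref{eqn_measurable}, this lets us compute $\P_j(B_j)$ fiber-by-fiber over the $\pi_{j-1}$-fibers. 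The upshot I expect is the bound
\[
\P_j(B_j) \le \min\left\{ M_j^{(1)}, \frac{M_j^{(2)}}{4\delta_j(1-\delta_j)} \right\},
\]
obtained by splitting the sum $\sum_x \P_{j-1}(x)\,[\text{new density factor}]$ according to whether $\alpha_j(x) < \delta_j$ or $\ge \delta_j$: in the first case the contribution to $\P_j(B_j)$ is zero and to $M_j^{(1)} = \E_{j-1}[\alpha_j]$ is nonnegative; in the second case one checks $\P_j(B_j\cap F_{j-1}(x)) = \delta_j \P_{j-1}(F_{j-1}(x))$ and then uses $\delta_j \le \alpha_j(x)$ for the $M_j^{(1)}$ bound and $\delta_j \le \alpha_j(x)^2/(4\delta_j(1-\delta_j))$ — valid precisely because $t \mapsto t - t^2/(4\delta(1-\delta))$ is maximized appropriately on $[\delta,1]$ — for the $M_j^{(2)}$ bound.

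Next I would pass from a single step to the final measure. The monotonicity to exploit is that for $k > j$, the set $B_j$ is $\pi_j$-measurable hence $\pi_{k-1}$-measurable (using the inverse-system property, which guarantees $\pi_k$ refines $\pi_{k-1}$ refines $\cdots$), and one checks from the definition of $\P_k$ that $\P_k(B) \le \P_{k-1}(B)$ whenever $B$ is $\pi_{k-1}$-measurable — each reweighting branch multiplies $\P_{k-1}$ by a factor $\le 1$ on $B_k$ and the mass only redistributes within fibers. Therefore $\P_J(B_j) \le \P_j(B_j)$ for every $j$, and by the union bound
\[
\P_J\left( \bigcup_{1\le j\le J} B_j \right) \le \sum_{j=1}^J \P_J(B_j) \le \sum_{j=1}^J \P_j(B_j) \le \sum_{j=1}^J \min\left\{ M_j^{(1)}, \frac{M_j^{(2)}}{4\delta_j(1-\delta_j)} \right\} < 1,
\]
which is the claim.

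I expect the main obstacle to be the careful verification, at the single-step level, that the two branches of the definition of $\P_j$ genuinely produce the density drop to $\delta_j$ and that the rescaling factors are all in $[0,1]$ on $B_j$ (one must confirm $\alpha_j(x) - \delta_j \ge 0$ in the relevant branch, i.e.\ that the numerator $\alpha_j(x) - 1_{x\in B_j}\delta_j$ stays nonnegative, and handle the degenerate cases $\alpha_j(x) = 0$ or $\alpha_j(x) = 1$ and $\delta_j \in \{0, 1/2\}$). The inequality $\delta_j \le \alpha_j(x)^2/(4\delta_j(1-\delta_j))$ for $\alpha_j(x) \ge \delta_j$ reduces to $4\delta_j^2(1-\delta_j) \le \alpha_j(x)^2$, and since $\alpha_j(x) \ge \delta_j$ and $\delta_j \le 1/2$ we have $\alpha_j(x)^2 \ge \delta_j^2 \ge 4\delta_j^2(1-\delta_j)$ because $4(1-\delta_j) \ge 2 \ge 1$; this is the elementary inequality underlying the second term in the minimum, and getting the bookkeeping around it exactly right — rather than any deep idea — is where the work lies. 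Everything else is the union bound plus the fiber-wise monotonicity, which transfer verbatim from \cite{BBMRS2022, KKL2022}.
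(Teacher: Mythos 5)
Your overall architecture matches the paper's: establish the single-step bound $\P_j(B_j)\le\min\{M_j^{(1)},M_j^{(2)}/(4\delta_j(1-\delta_j))\}$, propagate it to $\P_J(B_j)$ using the $\pi_j$-measurability of $B_j$ together with the inverse-system property and \eqref{eqn_measurable}, and finish with the union bound. The propagation step and the union bound are fine (in fact \eqref{eqn_measurable} gives equality $\P_J(B_j)=\P_j(B_j)$, not just $\le$). But the single-step estimate — the heart of the distortion method — is derived incorrectly.

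First, on a fiber $F=F_{j-1}(x)$ with $\alpha_j(x)\ge\delta_j$, the new density of $B_j$ is not $\delta_j$: from the definition, points of $B_j\cap F$ get the factor $\frac{\alpha_j-\delta_j}{\alpha_j(1-\delta_j)}$, so $\P_j(B_j\cap F)=\frac{\alpha_j-\delta_j}{1-\delta_j}\,\P_{j-1}(F)$, while $\P_j(F)=\P_{j-1}(F)$. Second, and more seriously, the elementary inequality you invoke for the second-moment bound, $\delta_j\le\alpha_j(x)^2/(4\delta_j(1-\delta_j))$ for $\alpha_j(x)\ge\delta_j$, is false: it is equivalent to $4\delta_j^2(1-\delta_j)\le\alpha_j(x)^2$, and for $\alpha_j(x)=\delta_j\le 1/2$ one has $4\delta_j^2(1-\delta_j)\ge 2\delta_j^2>\delta_j^2=\alpha_j(x)^2$. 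Your own justification ("because $4(1-\delta_j)\ge 2\ge 1$") actually proves the reverse of the inequality you need. Note also that if the density really did drop only to $\delta_j$, the claimed bound would fail outright (take every fiber with $\alpha_j=\delta_j=1/2$: your formula gives $\P_j(B_j)=1/2$ while $M_j^{(2)}/(4\delta_j(1-\delta_j))=1/4$). The correct route, as in the paper, is to write $\P_j(B_j)=\frac{1}{1-\delta_j}\sum_{x\in S}\max\{0,\alpha_j(x)-\delta_j\}\P_{j-1}(x)$ and apply the inequality $\max\{a-d,0\}\le a^2/(4d)$ with $a=\alpha_j(x)$, $d=\delta_j$; the first-moment bound follows even more simply from the fact that the reweighting factor is $\le 1$ on $B_j$, so $\P_j(B_j)\le\P_{j-1}(B_j)=\E_{j-1}[\alpha_j]$.
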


\begin{proof} Following the proof of \cite[Theorem~3.1]{BBMRS2022}, we first prove that
\begin{equation} \label{eqn_pjbj}
    \P_j(B_j)\le \min\set{M_j^{(1)},\frac{M_j^{(2)}}{4\delta_j(1-\delta_j)}}
\end{equation}
for all $1\le j \le J$. Indeed, by the construction of $\P_j$, we have $\P_j(B_j) \le \P_{j-1}(B_j)=\E_{j-1}[\alpha_j]$.  Moreover, by the inequality $\max\set{a-d,0}\le a^2/4d$ for all $a,d>0$ we have 
\begin{align*}
    \P_j(B_j) & = \sum_{x\in B_j} \max \set{0,\frac{\alpha_j(x)-\delta_j}{\alpha_j(x)(1-\delta_j)}}\P_{j-1}(x) \\
    & = \frac{1}{1-\delta_j}\sum_{x\in S} \max \set{0,\alpha_j(x)-\delta_j}\P_{j-1}(x) \\
    & \le \frac{1}{1-\delta_j}\sum_{x\in S} \frac{\alpha_j(x)^2}{4\delta_j}\P_{j-1}(x)\\
    & = \frac{\E_{j-1}[\alpha_j^2]}{4\delta_j(1-\delta_j)}.
\end{align*}
Hence \eqref{eqn_pjbj} holds.

Now, we assume $B_j$ is $\pi_j$-measurable for all $1\le j \le J$. Then we have $F_j(x)\subset B_j$ for all $x\in B_j$. So each $B_j$ is a disjoint union of some subsets of the form $F_j(x)$.  By \eqref{eqn_measurable}, we see that $\P_{j+1}(B_j)=\P_j(B_j)$. Since $\set{\pi_j}$ is an inverse system, $B_j$ is $\pi_i$-measurable as well for any $i>j$. It follows by induction that we have $\P_j(B_j)=\P_{j+1}(B_j)=\cdots=\P_J(B_j)$. Thus, summing both sides of \eqref{eqn_pjbj} over $1\le j \le J$ we obtain the desired result.
\end{proof}

\subsection{The distortion method for number fields}

Let $K/\Q$ be a number field.  Let $\cA=\set{a_1+I_1, \dots, a_n+I_n}$ be a collection of congruence classes of multiplicity $s$ for $a_1,\dots, a_n\in \cO_K$ and ideals $I_1,\dots,I_n$ in $\cO_K$ with $\norm{I_1}\le \cdots\le \norm{I_n}$. Let $Q=I_1\cap\cdots\cap I_n$. Then $\cO_K/Q$ is a finite set. Suppose we have the prime ideal decomposition $Q=\prod_{i=1}^J\fp_i^{\nu_i}$ with $\norm{\fp_1} \le \cdots \le \norm{\fp_J}$. For $1\le j \le J$, we take $Q_j=\prod_{i=1}^j\fp_i^{\nu_i}$ and  let $\pi_j:\cO_K/Q\to \cO_K/Q_j$ be the natural projections. Then $\set{\pi_j}$ is an inverse system. For $1\le j \le J$, let 
\[
\cB_j\colonequals\bigcup_{\substack{1\le i\le n\\ I_i|Q_j, I_i\nmid Q_{j-1}}} \set{a+Q: a \equiv a_i \mod{I_i}}.
\]
Clearly, $\cB_j$ is a $\pi_j$-measurable subset of $\cO_K/Q$ for each $1\le j\le J$. 

Now, take $\P_0$ to be the uniform probability measure on $\cO_K/Q$. Then we can construct a sequence of probability measures $\P_1, \cdots, \P_J$ on $\cO_K/Q$ with respect to the triples $\set{(\pi_j, \cB_j, \delta_j): 1\le j\le J}$ and $\P_0$ by taking $S=\cO_K/Q$ and $B_j=\cB_j$ in Section~\ref{sec_distortion_method_general}. Here  $\delta_1,\dots, \delta_J\in [0,1/2]$ are parameters.  By Theorem~\ref{thm_distortion_method}, we have the following result of the distortion method in \cite{BBMRS2022, KKL2022} for number fields.

\begin{theorem}\label{thm_distortion_method_nf}
Let $K/\Q$ be a number field.  Let $\cA$ be a finite collection of congruence classes in $\cO_K$. Assume the above construction. If
	\[
	\sum_{j=1}^J
	\min\left\{M_j^{(1)},\frac{M_j^{(2)}}{4\delta_j(1-\delta_j)}\right\}<1 ,
	\]
	then $\cA$ does not cover the algebraic integers of $K$. 
\end{theorem}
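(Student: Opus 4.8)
The plan is to invoke Theorem~\ref{thm_distortion_method} with $S=\cO_K/Q$, the inverse system $\set{\pi_j}$, the subsets $B_j=\cB_j$, the parameters $\delta_1,\dots,\delta_J$, and the uniform measure $\P_0$ from the construction above. First I would check that the hypotheses are met: setting $Q_0=\cO_K$ (the empty product), the map $\pi_0=\pi\colon \cO_K/Q\to\cO_K/\cO_K$ lands in a one-point set, so $\P_0$ is automatically $\pi_0$-measurable; the paragraph preceding the theorem already records that $\set{\pi_j}$ is an inverse system and that each $\cB_j$ is $\pi_j$-measurable. Hence, under the assumed bound $\sum_{j=1}^J\min\set{M_j^{(1)},M_j^{(2)}/(4\delta_j(1-\delta_j))}<1$, Theorem~\ref{thm_distortion_method} gives $\P_J\of{\bigcup_{j=1}^J\cB_j}<1$, and since $\P_J$ is a probability measure on the finite set $\cO_K/Q$, this forces $\bigcup_{j=1}^J\cB_j\neq\cO_K/Q$.

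The second step is to unwind what the sets $\cB_j$ are. Because $Q=I_1\cap\cdots\cap I_n\subseteq I_i$, every prime ideal dividing $I_i$ is among $\fp_1,\dots,\fp_J$; and since $I_i$ is distinguishable, $P_{\min}(I_i)$ is a well-defined prime ideal, so $P_{\min}(I_i)=\fp_{j(i)}$ for a unique $j(i)\in\set{1,\dots,J}$. Thus the congruence class $a_i+I_i$, which is a union of residue classes modulo $Q$ since $Q\subseteq I_i$, is exactly one of the pieces comprising $\cB_{j(i)}$, and conversely every residue class lying in some $\cB_j$ arises in this way. Therefore, as subsets of $\cO_K/Q$,
\[
\bigcup_{j=1}^J\cB_j=\set{a+Q: a\equiv a_i\mod{I_i}\ \text{for some}\ 1\le i\le n}.
\]

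For the conclusion, the first step produces an element $a\in\cO_K$ with $a+Q\notin\bigcup_{j=1}^J\cB_j$; by the displayed identity, $a\not\equiv a_i\mod{I_i}$ for every $1\le i\le n$, i.e.\ $a\notin\bigcup_{i=1}^n(a_i+I_i)$. Hence $\cA$ does not cover $\cO_K$. I expect the only nontrivial point to be the bookkeeping of the second step, and in particular the single place where the hypotheses on $\cA$ are genuinely used: the distinguishability of each $I_i$, which is what makes $P_{\min}(I_i)$, and hence the assignment of each congruence class of $\cA$ to a unique $\cB_j$, well-defined; the rest is a formal consequence of Theorem~\ref{thm_distortion_method}.
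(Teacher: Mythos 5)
Your proposal is correct and follows the same route as the paper, which simply invokes Theorem~\ref{thm_distortion_method} with $S=\cO_K/Q$ and $B_j=\cB_j$; you have in fact supplied more detail than the paper does, in particular the verification that $\P_0$ (being uniform, hence constant) is $\pi_0$-measurable for the trivial projection $\pi_0$, and the identification of $\bigcup_j\cB_j$ with the union of the classes $a_i+I_i$ reduced mod $Q$, where distinguishability guarantees each class is assigned to exactly one $\cB_{j(i)}$.
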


To prove Theorem~\ref{mainthm} via Theorem~\ref{thm_distortion_method_nf}, we take $S=\cO_K/Q$ and $B_j=\cB_j$ in Section~\ref{sec_distortion_method_general} and assume the above notation in the following sections.

\section{Bounding the first and second moments}\label{boundind_moments}

We will estimate $M_j^{(1)}$ and $M_j^{(2)}$ in this section. Recall that 
\[
\alpha_j(x)= \frac{|F_{j-1}(x)\cap \cB_j|}{|F_{j-1}(x)|},
\]
where $F_{j-1}(x)=\set{a+Q: a\equiv c\mod{Q_{j-1}}}$ for any $x=c+Q\in\cO_K/Q$ with $c\in\cO_K$.

\begin{lemma} \label{lem:B in fibers}
For $x\in\cO_K/Q$ and $1\le j \le J$, we have
\[
\alpha_j(x) \le \sum_{r=1}^{\nu_j} \sum_{H|Q_{j-1}}  \sum_{\substack{1\le i\le n \\ I_i=H \fp_j^r}} \frac{1_{x \subseteq a_i+H}}{\norm{\fp_j}^r}. 
\]
\end{lemma}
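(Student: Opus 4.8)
The plan is to unwind the definition of $\alpha_j(x)$ and count the elements of $F_{j-1}(x) \cap \cB_j$ by sorting them according to which congruence class $a_i + I_i$ with $P_{\min}(I_i) = \fp_j$ they arise from. Fix $x = c + Q \in \cO_K/Q$. By definition $F_{j-1}(x)$ is the preimage under $\pi_{j-1}$ of $c + Q_{j-1}$, a single fiber of size $\norm{Q}/\norm{Q_{j-1}}$, and $\cB_j$ is the union over $1 \le i \le n$ with $P_{\min}(I_i) = \fp_j$ of the classes $\set{a + Q : a \equiv a_i \mod{I_i}}$. So by the union bound,
\[
|F_{j-1}(x) \cap \cB_j| \le \sum_{\substack{1 \le i \le n \\ P_{\min}(I_i) = \fp_j}} \#\set{a + Q \in F_{j-1}(x) : a \equiv a_i \mod{I_i}}.
\]
The next step is to note that if $P_{\min}(I_i) = \fp_j$, then $I_i$ is divisible by $\fp_j$ (indeed by a positive power $\fp_j^r$), and since $\norm{\fp_j}$ is strictly the largest norm among prime factors of $I_i$, all other prime factors of $I_i$ lie among $\fp_1, \dots, \fp_{j-1}$; moreover the $\fp_j$-part of $I_i$ is exactly $\fp_j^r$ with $1 \le r \le \nu_j$ (since $I_i \mid Q$ forces $r \le \nu_j$), so we may write $I_i = H\fp_j^r$ with $H \mid Q_{j-1}$. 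This is exactly the index set over which the triple sum in the statement ranges.

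It then remains to bound, for a fixed $i$ with $I_i = H\fp_j^r$, the quantity $\#\set{a + Q \in F_{j-1}(x) : a \equiv a_i \mod{I_i}}$. Since the elements of $F_{j-1}(x)$ are precisely the $a + Q$ with $a \equiv c \mod{Q_{j-1}}$, and $H \mid Q_{j-1}$, the congruence $a \equiv a_i \mod H$ is either automatically satisfied by every element of $F_{j-1}(x)$ — this happens exactly when $c \equiv a_i \mod H$, i.e. when $x \subseteq a_i + H$ — or satisfied by none of them. In the former case the count is governed purely by the condition $a \equiv a_i \mod{\fp_j^r}$ imposed on top of $a \equiv c \mod{Q_{j-1}}$; by the Chinese Remainder Theorem (using $\gcd(Q_{j-1}, \fp_j^r) = \cO_K$, valid because $\fp_j \nmid Q_{j-1}$) this cuts the fiber down by a factor of exactly $\norm{\fp_j^r} = \norm{\fp_j}^r$, giving $|F_{j-1}(x)|/\norm{\fp_j}^r$ elements. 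Dividing the union bound through by $|F_{j-1}(x)|$ yields precisely
\[
\alpha_j(x) \le \sum_{r=1}^{\nu_j} \sum_{H \mid Q_{j-1}} \sum_{\substack{1 \le i \le n \\ I_i = H\fp_j^r}} \frac{1_{x \subseteq a_i + H}}{\norm{\fp_j}^r},
\]
as claimed. The one point requiring a little care — and the closest thing to an obstacle — is the bookkeeping around $P_{\min}$: one must invoke distinguishability of $I_i$ to guarantee that $\fp_j$ occurs with a well-defined strictly-dominant role, and then confirm that the remaining prime factors all divide $Q_{j-1}$ so that the factorization $I_i = H\fp_j^r$ with $H \mid Q_{j-1}$ is legitimate; the CRT splitting and the fiber-counting are then routine.
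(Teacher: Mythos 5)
Your proof is correct and follows essentially the same route as the paper's: a union bound over the classes $a_i+I_i$ with $P_{\min}(I_i)=\fp_j$, the factorization $I_i=H\fp_j^r$ with $H\mid Q_{j-1}$ coming from distinguishability, the compatibility condition $x\subseteq a_i+H$, and the CRT count of $|F_{j-1}(x)|/\norm{\fp_j}^r$ elements in each fiber. No gaps.
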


\begin{proof} First, we have $|F_{j-1}(x)|=\norm{Q}/\norm{Q_{j-1}}$. Write $x=c+Q$ for some $c\in\cO_K$. Then for any $a\in F_{j-1}(x)\cap\cB_j$, there exists some $1\le i \le n$ such that $a\equiv a_i\mod{I_i}$ and $I_i|Q_j, I_i\nmid Q_{j-1}$. Moreover, we have $a\equiv c \mod{Q_{j-1}}$. Then it follows that

\begin{equation*}
    \alpha_j(x)=\frac{|F_{j-1}(x)\cap\cB_j|}{\norm{Q}/\norm{Q_{j-1}}}\leq \frac{\norm{Q_{j-1}}}{\norm{Q}}\sum_{\substack{1\le i\le n\\ I_i|Q_j, I_i\nmid Q_{j-1}}}\sum_{\substack{a \mod{Q}\\ a\equiv c\mod{Q_{j-1}}\\ a\equiv a_i\mod{I_i}}}1.
\end{equation*} 

For each $i$ with $I_i|Q_j, I_i\nmid Q_{j-1}$, we write uniquely $I_i=H\fp_j^r$ with $H|Q_{j-1}$ and $1\leq r\leq \nu_j$. Since $a\equiv a_i\mod{I_i}$ and $a\equiv c\mod{Q_{j-1}}$, we have $c\equiv a_i\mod{H}$ and hence $x\subseteq a_i+H$. By $a\equiv a_i\mod{I_i}$ we have $a\equiv a_i\mod{\fp_j^r}$. It follows by the Chinese Remainder Theorem that $a$ lies in some congruence class mod $Q_{j-1}\fp_j^r$. In particular, there are $\norm{Q}/\norm{Q_{j-1}\fp_j^r}$ choices for $a \mod{Q}$ lying in a congruence class mod $Q_{j-1}\fp_j^r$. Then the desired estimate follows by these conditions. 
\end{proof}

\begin{lemma}\label{lemma applied in proof of Lemma 4.2}
For each $0\le j \le J$,  $a\in\cO_K$, and any ideal $I\subset \cO_K$ such that $I|Q$, we have
\[
\P_j(a+I)\le \frac{1}{\norm{I}}\prod_{\fp_i|I, i\le j}\frac{1}{1-\delta_i}.
\]
\end{lemma}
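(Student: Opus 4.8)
The plan is to prove this by induction on $j$. The base case $j=0$ is immediate: $\P_0$ is the uniform measure on $\cO_K/Q$, so $\P_0(a+I) = \norm{Q}/\norm{I} \cdot \norm{Q}^{-1} = 1/\norm{I}$, and the product over $\fp_i \mid I$ with $i \le 0$ is empty, hence equals $1$. For the inductive step, suppose the bound holds for $\P_{j-1}$ and all ideals $I \mid Q$; we want it for $\P_j$. The key structural fact is that $a+I$, being a union of cosets of $Q$, decomposes according to whether we are inside or outside $\cB_j$, and the definition of $\P_j$ only inflates mass on the "$\alpha_j \ge \delta_j$'' part, and there only by the factor $1/(1-\delta_j)$ at worst (since $\frac{\alpha_j(x)-1_{x\in \cB_j}\delta_j}{\alpha_j(x)(1-\delta_j)} \le \frac{1}{1-\delta_j}$), while on the "$\alpha_j < \delta_j$'' part it multiplies by $\frac{1_{x\notin\cB_j}}{1-\alpha_j(x)}$.

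First I would split into two cases according to whether $\fp_j \mid I$. If $\fp_j \nmid I$, then $a+I$ is a union of fibers $F_{j-1}(y)$ (because $I \mid Q_{j-1}$ in that case, as $I \mid Q$ and $\fp_j\nmid I$), and by \eqref{eqn_measurable} applied fiberwise, $\P_j(a+I) = \P_{j-1}(a+I)$; the inductive hypothesis gives $\P_{j-1}(a+I) \le \frac{1}{\norm{I}}\prod_{\fp_i\mid I, i\le j-1}\frac{1}{1-\delta_i}$, and since $\fp_j\nmid I$ this product is exactly $\prod_{\fp_i\mid I, i\le j}\frac{1}{1-\delta_i}$, as desired. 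If $\fp_j \mid I$, write $I = \fp_j^{t} I'$ with $\fp_j \nmid I'$ and $t \ge 1$. Here the coset $a+I$ is a \emph{proper} subset of the fiber $F_{j-1}(a')$ where $a' \equiv a \pmod{Q_{j-1}/\gcd(Q_{j-1},I)}$... more carefully: $a+I$ sits inside the larger coset $a + I\cdot(\text{remove the }\fp_j\text{-part beyond }Q_{j-1})$, and I need to relate $\P_j$-mass on $a+I$ to $\P_{j-1}$-mass on a suitable superset. The cleanest route: bound $\P_j(x) \le \P_{j-1}(x)/(1-\delta_j)$ pointwise for every $x$ with $x \in a+I \subseteq \cB_j$-relevant region — but this pointwise bound fails when $\alpha_j(x) < \delta_j$ and $x \notin \cB_j$, since then the factor is $1/(1-\alpha_j(x))$ which can exceed $1/(1-\delta_j)$. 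So instead I would sum over the fiber.

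The robust argument is: let $J$ be the coset $a + \fp_j^{\min(t,\nu_j)}I'$ enlarged to contain $a+I$ appropriately; concretely, decompose $a+I$ as a disjoint union over the cosets $F_{j-1}(x_k)$ that meet it, and within each such fiber $F = F_{j-1}(x)$ note $\alpha_j$ is constant and $\P_j(F \cap (a+I))$ can be bounded by splitting $F\cap(a+I)$ into its intersection with $\cB_j$ and its complement. On $F\setminus\cB_j$ with $\alpha_j < \delta_j$: the inflation factor is $\frac{1}{1-\alpha_j}$, but the total $\P_{j-1}$-mass on $F\setminus\cB_j$ is $(1-\alpha_j)\P_{j-1}(F)$ while $\P_{j-1}$ is uniform on $F$, so the mass that $a+I$ picks up there is $\le |F\cap(a+I)|/|F| \cdot (1-\alpha_j)\P_{j-1}(F) \cdot \frac{1}{1-\alpha_j} = \frac{|F\cap(a+I)|}{|F|}\P_{j-1}(F)$, matching the uniform-measure prediction exactly. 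On $F\cap\cB_j$ with $\alpha_j \ge \delta_j$: the factor is $\le \frac{1}{1-\delta_j}$, contributing $\le \frac{1}{1-\delta_j}\cdot\frac{|F\cap(a+I)|}{|F|}\P_{j-1}(F)$. Combining, $\P_j(a+I) \le \frac{1}{1-\delta_j}\sum_k \frac{|F_{j-1}(x_k)\cap(a+I)|}{|F_{j-1}(x_k)|}\P_{j-1}(F_{j-1}(x_k))$. Now $\frac{|F_{j-1}(x_k)\cap(a+I)|}{|F_{j-1}(x_k)|} = \norm{\gcd(I,Q_{j-1})}/\norm{I}$ (independent of $k$), and $\P_{j-1}(\bigcup_k F_{j-1}(x_k)) = \P_{j-1}(a + \gcd(I,Q_{j-1}))$, which by induction is $\le \frac{1}{\norm{\gcd(I,Q_{j-1})}}\prod_{\fp_i\mid I, i\le j-1}\frac{1}{1-\delta_i}$; the factors of $\norm{\gcd(I,Q_{j-1})}$ cancel and the extra $\frac{1}{1-\delta_j}$ completes the product up to $i\le j$.

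\textbf{Main obstacle.} The bookkeeping around which fibers $F_{j-1}(x_k)$ intersect $a+I$ and the identity $\frac{|F_{j-1}(x_k)\cap(a+I)|}{|F_{j-1}(x_k)|} = \frac{\norm{\gcd(I,Q_{j-1})}}{\norm{I}}$ — this is where one must be careful that $I \mid Q$ so that all the relevant ideals genuinely divide $Q$ and the Chinese-remainder-type counting is valid. The subtle point is handling the "$\alpha_j < \delta_j$, $x\notin\cB_j$'' region without losing a factor: the resolution, as above, is that the inflation $\frac{1}{1-\alpha_j}$ is exactly compensated by the fact that $\P_{j-1}$-mass there was already deflated by $(1-\alpha_j)$, so no net loss occurs and the only genuine cost is the $\frac{1}{1-\delta_j}$ from the $\cB_j$-part. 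Everything else is routine norm arithmetic.
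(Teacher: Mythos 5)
Your overall strategy (induction on $j$, splitting on whether $\fp_j\mid I$, reducing to the fiber ideal $\gcd(I,Q_{j-1})$) matches the paper's, and your treatment of the case $\fp_j\mid I$ is sound: the identity $|F_{j-1}(x_k)\cap(a+I)|/|F_{j-1}(x_k)|=\norm{\gcd(I,Q_{j-1})}/\norm{I}$ and the application of the inductive hypothesis to $a+\gcd(I,Q_{j-1})$ both go through. However, there is a genuine gap in the case $\fp_j\nmid I$: you assert that $I\mid Q$ and $\fp_j\nmid I$ imply $I\mid Q_{j-1}$, hence that $a+I$ is a union of $\pi_{j-1}$-fibers. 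This is false in general, since $I$ may be divisible by $\fp_{j+1},\dots,\fp_J$ (e.g.\ $I=\fp_{j+1}$); then $Q_{j-1}\not\subseteq I$, the set $a+I$ is a proper subset of every fiber it meets, and \eqref{eqn_measurable} alone does not yield $\P_j(a+I)=\P_{j-1}(a+I)$. The conclusion is still correct, but proving it requires the paper's factorization $I=ML$ with $M=\gcd(I,Q_{j-1})$ and $L$ supported on $\fp_i$ with $i>j$: one uses the $\pi_j$-measurability of $\P_j$ (and $\pi_{j-1}$-measurability of $\P_{j-1}$) together with the Chinese Remainder Theorem to get $\P_j(a+I)=\P_j(a+M)/\norm{L}=\P_{j-1}(a+M)/\norm{L}$, and then applies the inductive hypothesis to $M$.

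A secondary point: your stated reason for avoiding the pointwise bound is backwards. You claim $\P_j(x)\le\P_{j-1}(x)/(1-\delta_j)$ "fails when $\alpha_j(x)<\delta_j$ and $x\notin\cB_j$" because $1/(1-\alpha_j(x))$ can exceed $1/(1-\delta_j)$; but in that branch $\alpha_j(x)<\delta_j$, so $1/(1-\alpha_j(x))<1/(1-\delta_j)$. The pointwise bound holds in every branch of the definition of $\P_j$, and the paper uses it to dispatch the case $\fp_j\mid I$ in one line. Your fiber-by-fiber "compensation" computation is therefore an unnecessary detour; moreover, its intermediate claim that the mass picked up on $F\setminus\cB_j$ is exactly $\frac{|F\cap(a+I)|}{|F|}\P_{j-1}(F)$ tacitly assumes $|F\cap(a+I)\cap\cB_j|\ge\alpha_j\,|F\cap(a+I)|$, which is not justified — although the weaker estimate with the uniform factor $1/(1-\delta_j)$, which is all your final display actually uses, is correct.
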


\begin{proof} We follow the proof of \cite[Lemma~3.4]{BBMRS2022} by induction on $j$. Since $\P_0$ is the uniform probability measure, we have $\P_0(a+I)=1/\norm{I}$. So the estimate holds for the case $j=0$. Let $j\ge1$, and assume the estimate holds for $\P_{j-1}$. By the definition of $\P_j$, we notice that 
$$\P_j(x+Q)\le\frac{1}{1-\delta_j}\P_{j-1}(x+Q), \forall x\in \cO_K.$$ 

Suppose that $\fp_j | I$, then we have 
\begin{align*}
    \P_j(x+I)&\le \frac{1}{1-\delta_j}\P_{j-1}(a+I)\\
    &\le \frac{1}{\norm{I}}\cdot \frac{1}{1-\delta_j}\prod_{\fp_i|I, i< j}\frac{1}{1-\delta_i}\\
    &=\frac{1}{\norm{I}}\prod_{\fp_i|I, i\le j}\frac{1}{1-\delta_i}.
\end{align*}

If $\fp_j \nmid I$, then $I=ML$, where $M=\gcd(I,Q_j)=\gcd(I,Q_{j-1})$ and $\gcd(L,Q_{j-1})=\cO_K$. It follows that
\begin{align*}
    \P_j(a+I)&=\frac{\P_j(a+M)}{\norm{L}}=\frac{\P_{j-1}(a+M)}{\norm{L}}\\
    &\le \frac{1}{\norm{L}\norm{M}} \prod_{\fp_i|M, i< j}\frac{1}{1-\delta_i}=\frac{1}{\norm{I}}\prod_{\fp_i|I, i\le j}\frac{1}{1-\delta_i},
\end{align*}
as required.   
\end{proof}

Now, we are ready to prove the following bounds of $M_j^{(1)}$ and $M_j^{(2)}$.

\begin{lemma}\label{lem:moments} 
	Assume the above notation, let $s=m(\cA)$, and let $j\in\{1,2,\dots,J\}$. 
	\begin{enumerate}
	\item If $\delta_i=0$ for $i\in\{1,\dots,j-1\}$, then 
	\[
	M_j^{(1)} \le  s \sum_{\substack{\norm{I}\ge \norm{I_1} \\ P(I)=\norm{\fp_j}}} \frac{1}{\norm{I}},
	\]
	where $P(I)$ is the largest norm of the prime ideals dividing $I$.
	\item We have
	\[
	M_j^{(2)} \ll \frac{s^2(\log \norm{\fp_j})^6}{\norm{\fp_j}^2}.
	\]
		\end{enumerate}
\end{lemma}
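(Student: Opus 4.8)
The plan is to derive both estimates from the pointwise bound in Lemma~\ref{lem:B in fibers} by applying the expectation $\E_{j-1}$ and controlling $\P_{j-1}$ of a congruence class via Lemma~\ref{lemma applied in proof of Lemma 4.2}.

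For part~(1), I would apply $\E_{j-1}$ to Lemma~\ref{lem:B in fibers}, using $\E_{j-1}[1_{x\subseteq a_i+H}]=\P_{j-1}(a_i+H)$, to obtain
\[
M_j^{(1)}\le\sum_{r=1}^{\nu_j}\sum_{H\mid Q_{j-1}}\;\sum_{i:\,I_i=H\fp_j^r}\frac{\P_{j-1}(a_i+H)}{\norm{\fp_j}^r}.
\]
Every $H$ occurring here divides $Q_{j-1}$, and $\delta_i=0$ for $i<j$ by hypothesis, so the product in Lemma~\ref{lemma applied in proof of Lemma 4.2} equals $1$ and $\P_{j-1}(a_i+H)\le 1/\norm{H}$; hence each summand is at most $1/\norm{H\fp_j^r}=1/\norm{I_i}$. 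Regrouping the triple sum according to the ideal $I=H\fp_j^r$ — which is distinguishable with $P_{\min}(I)=\fp_j$, and conversely every distinguishable $I\mid Q$ with $P_{\min}(I)=\fp_j$ arises from a unique pair $(H,r)$ with $H\mid Q_{j-1}$ and $1\le r\le\nu_j$ — and bounding $\#\{i:I_i=I\}\le s$ together with $\norm{I}=\norm{I_i}\ge\norm{I_1}$ yields the claim (extending the sum from $I\mid Q$ to all of $\fD(K)$ only increases it).

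For part~(2), I would square the bound of Lemma~\ref{lem:B in fibers}, expand it into a double sum over index pairs $(i,i')$ with $I_i=H\fp_j^r$ and $I_{i'}=H'\fp_j^{r'}$, and take $\E_{j-1}$. For each pair the product $1_{x\subseteq a_i+H}1_{x\subseteq a_{i'}+H'}$ is the indicator of a single residue class modulo $\on{lcm}(H,H')=H\cap H'$ (or it vanishes identically when the two classes are disjoint), so by Lemma~\ref{lemma applied in proof of Lemma 4.2} and $\delta_i\le 1/2$ its $\P_{j-1}$-expectation is at most $2^{\omega(\on{lcm}(H,H'))}/\norm{\on{lcm}(H,H')}$. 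Bounding the number of admissible $i$ (resp.\ $i'$) by $s$ and summing the geometric series $\sum_{r\ge1}\norm{\fp_j}^{-r}\ll\norm{\fp_j}^{-1}$ over $r$ and $r'$ leaves
\[
M_j^{(2)}\ll\frac{s^2}{\norm{\fp_j}^2}\sum_{H,H'\mid Q_{j-1}}\frac{2^{\omega(\on{lcm}(H,H'))}}{\norm{\on{lcm}(H,H')}}.
\]

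The last — and main — step is to bound this sum. It factors as an Euler product over the prime ideals dividing $Q_{j-1}$: with $\fp^\nu\|Q_{j-1}$, the local factor at $\fp$ is $1+2\sum_{a=1}^{\nu}(2a+1)\norm{\fp}^{-a}=1+6\norm{\fp}^{-1}+O(\norm{\fp}^{-2})$. Taking logarithms, summing over $\norm{\fp}\le\norm{\fp_j}$, and invoking the Mertens-type estimate $\sum_{\norm{\fp}\le x}\norm{\fp}^{-1}=\log\log x+O_K(1)$ — a consequence of the prime ideal theorem (Landau) for $K$ — shows the product is $\ll_K(\log\norm{\fp_j})^6$, which finishes the proof; the dependence on $K$ in the implied constant enters only through this analytic input. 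The one place care is required is precisely here: one must evaluate the double sum over $(H,H')$ directly as an Euler product, since first bounding crudely the number of pairs with a prescribed $\on{lcm}$ would inflate the exponent of $\log\norm{\fp_j}$ above $6$, and one must verify that the $2^{\omega}$ factors produced by the $\delta_i$'s in Lemma~\ref{lemma applied in proof of Lemma 4.2} are harmless, i.e.\ absorbed into the $O(\norm{\fp}^{-2})$ term without cost.
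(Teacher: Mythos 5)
Your proposal is correct and follows essentially the same route as the paper: both parts are obtained by taking $\E_{j-1}$ of (the first or second power of) the bound in Lemma~\ref{lem:B in fibers}, controlling $\P_{j-1}$ of the resulting congruence classes via Lemma~\ref{lemma applied in proof of Lemma 4.2}, and for part (2) evaluating the sum over pairs $(H,H')$ as an Euler product with local factor $1+2\sum_{a\ge1}(2a+1)\norm{\fp}^{-a}$ before applying Mertens' estimate for number fields. The only cosmetic difference is that the paper treats the $k$-th moment uniformly before specializing to $k=1,2$, whereas you handle the two cases separately.
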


\begin{proof} 
Let $k$ be a positive integer. By Lemma~\ref{lem:B in fibers} we have
\begin{align}
    \E_{j-1}(\alpha_j^k)&=\sum_{x\in \cO_K/Q}\alpha_j^k(x)\P_{j-1}(x)\nonumber\\
    &\le \sum_{1\le r_1,\dots, r_k\le \nu_j} \sum_{H_1,\dots, H_k | Q_{j-1}} \sum_{\substack{1\le i_1,\dots, i_k\le n\\ I_{i_\ell}=H_\ell \fp_j^{r_\ell}, \forall \ell}} \frac{\P_{j-1}\left(\bigcap_{\ell=1}^k(a_{i_\ell}+H_\ell)\right)}{\norm{\fp_j}^{r_1+\dots+r_k}}.\label{kth moment}
\end{align}
Observe that $\norm{H_\ell\fp_j^{r_\ell}}\geq \norm{I_1}$ for all $1\le \ell\le k$, since $\norm{I_i}\geq \norm{I_1}$ for all $1\le i\le n$. Moreover, given $r_1, \ldots, r_k$ and $H_1, \ldots, H_k$, there are at most $s^k$ choices for $i_1, \ldots, i_k$ with $I_{i_\ell}=H_\ell \fp_j^{r_\ell}$. It follows by the Chinese Remainder Theorem that for each such choice of $i_1, \ldots, i_k$ the set $\bigcap_{\ell=1}^k(a_{i_\ell}+H_\ell)$ is either empty, or a congruence class mod $\bigcap_{\ell=1}^kH_\ell$.  Using Lemma~\ref{lemma applied in proof of Lemma 4.2} we can see that 
\begin{equation}\label{prob of intersection of APs}
    \P_{j-1}\left(\bigcap_{\ell=1}^k(a_{i_\ell}+H_\ell)\right) \le \frac{1}{\norm{\bigcap_{\ell=1}^k H_\ell}}\prod_{\substack{i\leq j-1\\ \fp_i|\bigcap_{\ell=1}^k H_\ell}}(1-\delta_i)^{-1}
\end{equation}
for at most $s^k$ choices of $1\le i_1,\dots, i_k\le n$.
Then plugging \eqref{prob of intersection of APs} into \eqref{kth moment} we obtain
\[
\E_{j-1}(\alpha_j^k)\le s^k \sum_{1\le r_1,\dots, r_k\le \nu_j} \sum_{\substack{H_1,\dots, H_k | Q_{j-1}\\ \norm{H_\ell\fp_j^{r_\ell}}\ge \norm{I_1}, \forall \ell}} \frac{1}{\norm{\bigcap_{\ell=1}^k H_\ell}\norm{\fp_j}^{r_1+\dots+r_k}}\prod_{\substack{i\leq j-1\\ \fp_i|\bigcap_{\ell=1}^k H_\ell}}(1-\delta_i)^{-1}.
\]
In particular, let $k=1$ and $\delta_i=0$ for all $i\leq j-1$, it follows that

\begin{align*}
    M_j^{(1)} & \le s\sum_{1\le r \le \nu_j} \sum_{\substack{H|Q_{j-1}\\ \norm{H\fp_j^r}\ge \norm{I_1}}} \frac{1}{\norm{H\fp_j^r}}  \\
    & \le s\sum_{\substack{\norm{I}\ge \norm{I_1}\\ P(I)=\norm{\fp_j}}}\frac{1}{\norm{I}}.
\end{align*}
This proves part (1) of the lemma. 

Next, let $k=2$ and $\delta_i\in[0,1/2]$ for all $i$. It is clear that
\begin{equation*}
    \prod_{\fp_i| H_1\cap H_2}(1-\delta_i)^{-1}\leq 2^{\omega(H_1\cap H_2)},
\end{equation*}
where $\omega(H_1\cap H_2)$ is the number of distinct prime ideals of $H_1\cap H_2$. Hence, we have
\begin{align*}
   M_j^{(2)} & \le s^2\sum_{1\leq r_1, r_2\leq \nu_j}\sum_{H_1, H_2\mid Q_{j-1}}\frac{2^{\omega(H_1\cap H_2)}}{\norm{H_1\cap H_2}\norm{\fp_j}^{r_1+r_2}}\\
   &\le \frac{s^2}{(\norm{\fp_j}-1)^2}\sum_{H_1, H_2\mid Q_{j-1}}\frac{2^{\omega(H_1\cap H_2)}}{\norm{H_1\cap H_2}}\\
   &\le \frac{s^2}{(\norm{\fp_j}-1)^2}\prod_{i<j}\left(1+\sum_{\nu\geq 1}\frac{2}{\norm{\fp_i}^\nu}\cdot (2\nu+1)\right)\\
   &\ll \frac{s^2}{\norm{\fp_j}^2}\prod_{i<j}\left(1+\frac{6}{\norm{\fp_i}}+O\left(\frac{1}{\norm{\fp_i}^2}\right)\right)\\
   &\leq \frac{s^2}{\norm{\fp_j}^2}\,\mathrm{exp}\left\lbrace \sum_{i<j}\frac{6}{\norm{\fp_i}}+O\left(\frac{1}{\norm{\fp_i}^2}\right)\right\rbrace\\
   &\ll \frac{s^2(\log\norm{\fp_j})^6}{\norm{\fp_j}^2}.
\end{align*}
Note that the last inequality in above calculations follows from Mertens' estimate for number fields, i.e., $\sum_{\norm{\fp}\leq x}1/\norm{\fp}=\log\log x+O(1)$, which follows from the prime number theorem for number fields (see \cite[p.~670]{Landau1903}) and partial summation. In conclusion, this completes the proof of part (2) of the lemma. 
\end{proof}

\section{Proof of Theorem~\ref{mainthm}}\label{sec_pf_mainthm}

In this section we finish the proof of Theorem~\ref{mainthm}. Let $\cA=\set{a_1+I_1, \dots, a_n+I_n}$ be a collection of congruence classes of multiplicity $s$ with $a_1,\dots, a_n\in \cO_K$ and ideals $I_1,\dots,I_n$ in $\cO_K$ with $\norm{I_1}\le \cdots\le \norm{I_n}$. To prove Theorem~\ref{mainthm}, it suffices to show that there exists a constant $c_K>0$ depending only on $K$ such that if
\[
\norm{I_1} > \exp\big(c_K\log^2(s+1)/\log\log(s+2)\big),
\]
then $\cA$ does not cover $\cO_K$. Moreover, using the notation in Section~\ref{sec_distortion_method}, by Theorem~\ref{thm_distortion_method_nf} it reduces to show that
\[
	\eta\colonequals \sum_{j=1}^J
	\min\left\{  M_j^{(1)},\frac{M_j^{(2)}}{4\delta_j(1-\delta_j)}\right\}<1. 
\]

Let $y = C s^3$, where $C$ is a large constant to be chosen later. Let $k = \max\{j \in [1, J] \cap \mathbb{Z}: \norm{\fp_j} \leq y\}$ and let 
\[
\delta_i = 
\begin{cases}
0 &\text{if } i \leq k, \\
\frac{1}{2} &\text{if } i > k. 
\end{cases}
\]
Then 
\[
\eta \leq \sum_{1 \leq j \leq k} M_j^{(1)} + \sum_{k < j \leq J} M_j^{(2)} =: \eta_1 + \eta_2. 
\]
Then it follows from Lemma~\ref{lem:moments} (2) that 
\begin{equation}\label{eqn_eta_2}
  \eta_2 \ll \sum_{\norm{\fp_j} > y} \frac{s^2 (\log \norm{\fp_j})^6}{\norm{\fp_j}^2} 
\ll \sum_{\norm{\fp_j} > y} \frac{s^2}{\norm{\fp_j}^{1.9}} 
\ll \frac{s^2}{y^{0.9}} 
= \frac{1}{C^{0.9} s^{0.7}}.   
\end{equation} 

Now, suppose $\norm{I_1} > x$, where $x$ is to be chosen later. To bound $\eta_1$,  we apply Lemma~\ref{lem:moments} (1)  to obtain that 

\begin{equation}\label{eqn_eta_1}
    \eta_1 
\le s \sum_{1 \leq j \leq k}
\sum_{\substack{\norm{I}\ge \norm{I_1} \\ P(I)=\norm{\fp_j}}} \frac{1}{\norm{I}} 
\le s 
\sum_{\substack{\norm{I}\ge x \\ P(I)\le y}} \frac{1}{\norm{I}},
\end{equation}
where $P(I)$ is the largest norm of prime factors of $I$. As regards the last sum above, we have the following estimate.

\begin{lemma}[{\cite[Lemma~2.6]{Kim2009}}]
\label{lem_smooth_number}
    Suppose $y\ge2$ and $y<x\le\exp(\exp(\log^{2/5}y))$. Let $u=\log x/\log y$, then
    \[
    \sum_{\substack{\norm{I}\ge x \\ P(I)\le y}} \frac{1}{\norm{I}} \ll (\log y)e^{-u\log u}.
    \]
\end{lemma}

To apply Lemma~\ref{lem_smooth_number}, we take 
\[
x \colonequals \exp \{\exp(\log^{1/6} C)\log^2(s + 1)/\log \log(s+2)\}.
\]
Then the assumptions in Lemma~\ref{lem_smooth_number} are satisfied for large enough $C$, and by \eqref{eqn_eta_1} we have
\begin{equation}\label{eqn_eta_1_2}
    \eta_1\ll s(\log y)e^{-u\log u}.
\end{equation}

By \eqref{eqn_eta_2} and \eqref{eqn_eta_1_2} above, we get that both $\eta_1$ and $\eta_2$ tend to  $0$ as $C\to\infty$. Therefore, we have $\eta < 1$ for sufficiently large $c_K$. This completes the proof.

\section{The \texorpdfstring{$j$}{j}-th smallest norm}\label{sectjthsmallestnorm}

Let $j\ge1$ be an integer. In \cite{KKL2022}, Klein, Koukoulopoulos, and Lemieux gave an upper bounded for  the $j$th smallest modulus in a minimal covering system with distinct moduli. In this section, as an application of Theorem~\ref{mainthm}, we obtain the following result.

\begin{theorem}\label{thmjthsmallestnorm}
    The $j$-th smallest norm in a minimal covering system of a number field with distinct moduli is bounded.
\end{theorem}

\begin{proof} We follow the argument in the proof of \cite[Theorem~1.5]{LWWY2024}. Let $K$ be a number field and $\cO_K$ be the ring of integers in $K$. We do induction on $j$. 

By Theorem~\ref{mainthm}, the case $j=1$ holds when we take $s=1$. Let $j>1$, and suppose that there are bounds $B_1,\dots,B_{j-1}$ such that the modulus with the $k$-th smallest norm in a minimal covering system of $\cO_K$ with distinct moduli is bounded by $B_k$  for each $1 \leq k < j$. 
	
Now, let $\cC=\set{a_i+ I_i \colon 1 \leq i \leq \ell}$ be a minimal covering system of $\cO_K$  with distinct moduli and $\ell \geq j$ congruences, and $\norm{I_1}\leq \cdots \leq \norm{I_\ell}$. By the assumption, we have that $\norm{I_k}\leq B_k$ for $1\leq k <j$. Let $L=\on{lcm}[I_1, \dots, I_{j-1}]$. Then $\norm{L}\leq B\colonequals\prod_{k=1}^{j-1}B_k$. We write $\cC=\cC_1 \cup \cC_2$, where $\cC_1=\set{a_k+ I_k \colon 1 \leq k <j}$ and $\cC_2=\set{a_i + I_i \colon j \leq i \leq \ell}$. Since $\cC$ is minimal, the congruences in $\cC_1$ do not form a covering system, hence the congruences in $\cC_2$
cover at least one congruence class modulo $L$, say $h+ L$. Let
\[
\cC_3\colonequals\set{a_i+r+I_i\colon j \leq i \leq \ell, r \in \cO_K/L },
\]
then $\cC_3$ is a covering system. Indeed, for any $a \in \cO_K$
, there exists some $b \in \cO_K/L $ such that $a -b \equiv h \mod L$. It follows that $a-b+L$ is covered by $\cC_2$. Thus, there exists some $j\leq i\leq \ell$ such that $a -b \equiv a_i \mod{I_i}$.  It follows that $a\in a_i+b+I_i$. Hence $\cC_3$ is a covering system. Notice that the multiplicity of $\cC_3$ is at most $s=\norm{L} \leq B$, and $I_j$ is the ideal with smallest norm in $\cC_3$. By Theorem~\ref{mainthm}, we have $$\norm{I_j}\le \exp\big(c_K\log^2(B+1)/\log\log(B+2)\big).$$ 
In particular, $\norm{I_j}$ is bounded. Thus, the desired result follows by induction.
\end{proof}

\textbf{Acknowledgements.} The authors would like to thank Chunlin Wang for helpful discussions. 

\textbf{Funding.} 
This work is supported by the National Natural Science Foundation of China (Grant No. 12561001). Huixi Li's research was also supported by the National Natural Science Foundation of China (Grant No. 12201313). Shaoyun Yi is supported by the National Natural Science Foundation of China (Nos.~12301016, 12471187) and the Fundamental Research Funds for the Central Universities (No.~20720230025).

\end{document}